\documentclass[12]{article}
\pagestyle{plain}
\usepackage{amsmath,amssymb,amsthm,color}

\newtheorem{theorem}{Theorem}[section]
\newtheorem{corollary}{Corollary}[section]
\newtheorem{lemma}{Lemma}[section]

\newtheorem{proposition}{Proposition}[section]
\newtheorem{definition}{Definition}[section]
\newtheorem{example}{Example}[section]

\begin{document}

\begin{center}{\bf \LARGE On $\ast-$Reverse Derivable Maps}\\
\vspace{.2in}
{\bf Gurninder S. Sandhu}\\
{\it Department of Mathematics,\\ Patel Memorial National College, Rajpura 140401,\\ Punjab, India.}\\
e-mail: gurninder\_rs@pbi.ac.in\\

{\bf Bruno L. M. Ferreira}\\
{\it Technological Federal University of Paran\'{a},\\
Professora Laura Pacheco Bastos Avenue, 800,\\
85053-510, Guarapuava, Brazil.}\\
e-mail: brunoferreira@utfpr.edu.br\\
and\\
{\bf Deepak Kumar}\\
{\it Department of Mathematics,\\ Punjabi University, Patiala-147002,\\ Punjab, India.}\\
e-mail: deep\_math1@yahoo.com\\

\end{center}

\begin{abstract}
Let $R$ be a ring with involution containing a nontrivial symmetric idempotent element $e$. Let $\delta: R\rightarrow R$ be a mapping such that $\delta(ab)=\delta(b)a^{\ast}+b^{\ast}\delta(a)$ for all $a,b\in R$, we call $\delta$ a $\ast-$reverse derivable map on $R$. In this paper, our aim is to show that under some suitable restrictions imposed on $R$, every $\ast-$reverse derivable map of $R$ is additive.
\end{abstract}

{\bf 2010 Mathematics Subject Classification.} 17C27 \\
{\bf Keyword:} Additivity, Reverse derivable maps, Involution, Peirce decomposition.

\section{Introduction}
Let $R$ be a ring, by a {\it derivation} of $R,$ we mean an additive map $\delta:R\rightarrow R$ such that $\delta(ab)=\delta(a)b+a\delta(b)$ for all $a,b\in R.$ A derivation which is not necessarily additive is said to be a {\it multiplicative derivation} or {\it derivable map} of $R.$ A mapping $\delta:R\rightarrow R$ is known as multiplicative Jordan derivation of $R$ if $\delta(ab+ba)=\delta(a)b+a\delta(b)+\delta(b)a+b\delta(a)$ for all $a,b\in R.$ In addition, $\delta$ is called $n-$multiplicative derivation of $R$ if $\delta(a_{1} a_{2}\cdots a_{n})=\sum_{i=1}^{n}a_{1}a_{2}\cdots\delta(a_{i})\cdots a_{n}$ for all $a_{1},a_{2},\cdots,a_{n}\in R.$ In \cite{RefJ2}, Herstein introduced a mapping ``$^\dag$" satisfying $(a+b)^{\dag}=a^{\dag}+b^{\dag}$ and $(ab)^{\dag}=b^{\dag}a+ba^{\dag}$ called a {\it reverse derivation}, which is certainly not a derivation. Moreover, a mapping $\delta: R\rightarrow R$ satisfying $\delta(ab)=\delta(b)a+b\delta(a)$ for all $a,b\in R$ is called a {\it multiplicative reverse derivation} or {\it reverse derivable map} of $R$. Let $e$ be an idempotent element of $R$ such that $e\neq 0, 1$. Then $R$ can be decomposed as follows:
\begin{center}$R=eRe\bigoplus eR(1-e)\bigoplus(1-e)Re\bigoplus(1-e)R(1-e)$\end{center}
This decomposition of $R$ is called {\it two-sided Peirce decomposition relative to $e$} (\cite{RefB1}, see pg. 48). It is easy to see that the components of this decomposition are the subrings of $R$ and for our convenience, we denote $R_{11}=eRe, R_{12}=eR(1-e), R_{21}=(1-e)Re$ and $R_{22}=(1-e)R(1-e)$. For any $r\in R,$ we denote the elements of $R_{ij}$ by $r_{ij}$ for all $i,j\in \{1,2\}.$ A mapping $\psi:R\rightarrow R$ is said to be a left (resp. right) centralizer if $\psi(ab)=\psi(a)b$ (resp. $\psi(ab)=a\psi(b)$) for all $a,b\in R.$ Moreover, if $\psi$ is left and right centralizer, then it is called {\it centralizer} of $R.$ A mapping $F:R\rightarrow R$ (not necessarily additive) such that $F(ab)=F(a)b+a\delta(b)$ for all $a,b\in R$ is said to be a multiplicative generalized derivation associated with derivation $\delta$ of $R.$ Note that, every multiplicative left centralizer is a multiplicative generalized derivation. By involution, we mean a mapping $\ast:R\rightarrow R$ such that $(x+y)^{\ast}=x^{\ast}+y^{\ast},$ $(x^{\ast})^{\ast}=x$ and $(xy)^{\ast}=y^{\ast}x^{\ast}$ for all $x,y\in R.$ An element $s\in R$ satisfying $s^{\ast} = s$ is called a \textit{symmetric element} of R.

\par The problem of when a multiplicative mapping must be additive has been studied by several authors. In this direction, Martindale \cite{RefJ3} gave a remarkable result. He discovered a set of conditions on $R$ such that every multiplicative isomorphism and anti-automorphism on $R$ is additive. In 1991, inspired by Martindale's work Daif \cite{RefJ1} extended these results to multiplicative derivations. He imposed same restrictions on $R$ and obtained the additivity of multiplicative derivations. In a very nice paper \cite{RefJ4}, Eremita and Ilisevic discussed the additivity of multiplicative left centralizers that are defined from $R$ into a bimodule $M$ over $R$ and gave a number of applications of the main result, that is stated as follows:
\par \emph{Let $R$ be a ring and $M$ be a bimodule over $R.$ Further, let $e_{1}\in R$
be a nontrivial idempotent (and $1-e_{1}=e_{2}$) such that for any $m\in M=\{m\in M: mZ(R)=(0)\},$ where $Z(R)$ denotes the center of $R,$
\begin{itemize}
\item[(i)] $e_{1}me_{1}Re_{2}=(0)$ implies $e_{1}me_{1}=0,$
\item[(ii)] $e_{1}me_{2}Re_{1}=(0)$ implies $e_{1}me_{2}=0,$
\item[(iii)] $e_{1}me_{2}Re_{2}=(0)$ implies $e_{1}me_{2}=0,$
\item[(iv)] $e_{2}me_{1}Re_{2}=(0)$ implies $e_{2}me_{1}=0,$
\item[(v)] $e_{2}me_{2}Re_{1}=(0)$ implies $e_{2}me_{2}=0,$
\item[(vi)] $e_{2}me_{2}Re_{2}=(0)$ implies $e_{2}me_{2}=0.$
\end{itemize}
Then every left centralizer $\phi:R\rightarrow M$ is additive}. An year later, Daif and Tammam-El-Sayiad \cite{RefJ5} investigated the additivity of multiplicative generalized derivations. In 2009, Wang \cite{RefJ6} extended the result of Daif and obtained the additivity of $n-$multiplicative derivation of $R.$ In a recent paper, Jing and Lu \cite{RefJ8} examined the additivity of multiplicative Jordan and multiplicative Jordan triple derivations. This sort of problems and their solutions are not limited only to the class associative rings. For the case of additivity of maps defined on non-associative rings and having a nontrivial idempotent, some results have already been proved. In alternative rings we can mention the works in \cite{bruno}, \cite{RBru}, \cite{RBru2}, \cite{BJH1}, \cite{BJH2}, \cite{BJH3}, \cite{RefJ7}, \cite{BR}. In light of all the cited papers, the natural question could be whether the results obtained for multiplicative derivations can also be discussed for multiplicative reverse derivations. In this paper, we consider this problem and answer it with the same set of assumptions taken by Martindale and Daif. Moreover, some appropriate examples are also given.

\section{Main Results}
\begin{definition}\label{definition1}
  Let $\ast$ be an involution on $R.$ Then an additive mapping $\delta:R\to R$ is called the $\ast-$reverse derivation if $\delta(ab)=\delta(b)a^{\ast}+b^{\ast}\delta(a)$ for all $a,b\in R.$ If $\delta$ is not necessarily additive then it is called $\ast-$reverse derivable map of $R.$
\end{definition}

\begin{example}\label{example1}
Let $R=\left\{\left(\begin{array}{ccc}
a & b \\
c & d
\end{array}
\right): a,b,c,d \in \mathbb{Z}\right\}$, where $\mathbb{Z}$ is the ring of integers. Define a mapping $\delta:R\rightarrow R$ such that $\delta\left(\begin{array}{ccc}
a & b \\
c & d
\end{array}
\right)=\left(\begin{array}{ccc}
-b & 2b \\
a-2c-d & b
\end{array}
\right)$ and $\ast$ be the standard involution of $R.$ Clearly, $\delta$ is a $\ast-$reverse derivation, which is neither a derivation nor a reverse derivation.
\end{example}

The main result of this paper reads as follows:

\begin{theorem}\label{theorem1}
Let $R$ be a ring with involution containing a nontrivial symmetric idempotent element $e$ such that
the following conditions are satisfied
\begin{itemize}
\item[(M1)] $xR=0$ implies $x=0.$

\item[(M2)] $eRx=0$ implies $x=0$ (hence $Rx=0$ implies $x=0$).

\item[(M3)] $exeR(1-e)=0$ implies $exe=0.$
 \end{itemize}
 Then every $\ast-$reverse derivable map $\delta: R\rightarrow R$ is additive.
\end{theorem}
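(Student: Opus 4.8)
The plan is to take advantage of the hypothesis that $e$ is \emph{symmetric}. Write $e_{1}=e$ and $e_{2}=1-e$; both are symmetric idempotents, so the involution maps each of $R_{11}$ and $R_{22}$ onto itself and interchanges $R_{12}$ and $R_{21}$, which means $(e_{i}+a_{ij})^{\ast}=e_{i}+a_{ij}^{\ast}$ with $a_{ij}^{\ast}$ again an off-diagonal element. First I would dispose of the easy preliminaries, all of which use only the idempotent and involution structure and none of (M1)--(M3): from $\delta(0)=\delta(0\cdot 0)$ one gets $\delta(0)=0$; comparing Peirce components in $\delta(e_{i})=\delta(e_{i}e_{i})=\delta(e_{i})e_{i}+e_{i}\delta(e_{i})$ forces the diagonal components of $\delta(e_{i})$ to vanish, so $\delta(e_{1}),\delta(e_{2})\in R_{12}\oplus R_{21}$; and expanding $\delta(e_{1}e_{2})=0=\delta(e_{2}e_{1})$ gives $\delta(e_{1})+\delta(e_{2})=0$. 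Along the way one also records that, for a fixed block $R_{ij}$, the map $c_{ij}\mapsto\delta(c_{ij})$ agrees with an explicitly additive map (built from the fixed element $\delta(e_{1})$ via the involution and the Peirce multiplication rules) except in one distinguished Peirce component.

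The key reduction is the \emph{Peirce-additivity identity}
\[
\delta(x)=\sum_{i,j\in\{1,2\}}\delta(e_{i}xe_{j})\qquad(x\in R).
\]
I would prove it by writing $\delta(e_{i}xe_{j})=\delta\big((e_{i}x)e_{j}\big)=\delta(e_{j})x^{\ast}e_{i}+e_{j}\delta(e_{i}x)$ and $\delta(e_{i}x)=\delta(x)e_{i}+x^{\ast}\delta(e_{i})$, substituting the second into the first, and summing over $i$ and $j$: the two ``cross'' sums collapse using $e_{1}+e_{2}=1$ and $\delta(e_{1})+\delta(e_{2})=0$, leaving precisely $\delta(x)$. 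Granting this, the additivity of $\delta$ reduces to additivity of $\delta$ on each block $R_{ij}$ separately: for $a,b\in R$ one has $\delta(a+b)=\sum_{i,j}\delta(a_{ij}+b_{ij})$, and block-additivity converts this into $\sum_{i,j}\big(\delta(a_{ij})+\delta(b_{ij})\big)=\delta(a)+\delta(b)$, the last equality again by the identity applied to $a$ and to $b$.

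For the off-diagonal blocks I would use the multiplicative identities $(e_{1}+a_{12})(e_{2}+b_{12})=a_{12}+b_{12}$ and $(e_{2}+a_{21})(e_{1}+b_{21})=a_{21}+b_{21}$, valid because products of two elements of the same off-diagonal block vanish. Applying $\delta$, splitting $\delta(e_{i}+c_{ij})=\delta(e_{i})+\delta(c_{ij})$ on each factor by Peirce-additivity, and expanding produces eight summands; four of them reassemble into $\delta(a_{12})+\delta(b_{12})$ after using $a_{12}=(a_{12})e_{2}=e_{1}(a_{12})$ and the defining relation, while the remaining four are $\delta(e_{1}e_{2})$, $\delta(e_{2}e_{1})$ and $\delta(a_{12}b_{12})$-type terms, all zero. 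Thus $\delta$ is additive on $R_{12}$ and on $R_{21}$, with no use of (M1)--(M3).

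The diagonal blocks are where the faithfulness hypotheses come in and where I expect the real work to lie. Fix $a_{11},b_{11}\in R_{11}$ and put $w=\delta(a_{11}+b_{11})-\delta(a_{11})-\delta(b_{11})$. By the structural observation from the first paragraph, the only Peirce component of $c_{11}\mapsto\delta(c_{11})$ that can fail to be additive is the $R_{11}$-component, so $w\in R_{11}$; and multiplying $a_{11}+b_{11}$ on the appropriate side by an arbitrary element of $R_{21}$ and invoking the already-proved additivity on $R_{21}$ forces $wR_{12}=0$, i.e.\ $(ewe)R(1-e)=0$, so $w=0$ by (M3). The argument for $R_{22}$ is the mirror image: one arrives at an error term $w'\in R_{22}$ with $R_{12}w'=0$, hence $eRw'=0$, hence $w'=0$ by (M2). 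Together with the off-diagonal cases and the Peirce-additivity identity this completes the proof. The main obstacle is the careful bookkeeping underlying these last steps: one has to pin down exactly which Peirce components of $\delta(e_{i})$, of $\delta(a_{ij})$ for $i\neq j$, and of $\delta(a_{ii})$ are forced in terms of $\delta(e_{1})$, since both the clean collapse in the off-diagonal case and the localization $w\in R_{ii}$ in the diagonal case hinge on it. Conditions (M1)--(M3) are invoked exactly at the points where such an error term must be annihilated; (M2) and (M3) are visibly responsible for the two diagonal blocks as above, while (M1) is the ambient right-faithfulness that I expect to be needed to legitimize the remaining cancellations.
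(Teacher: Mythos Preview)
Your proposal is correct and follows a genuinely different route from the paper. The paper first replaces $\delta$ by $\Delta=\delta-\wp$, where $\wp(x)=[a_{21}-a_{12},x^{\ast}]$ is an inner $\ast$-reverse derivation built from the off-diagonal components of $\delta(e)$ so that $\Delta(e)=0$; with this normalization one gets the clean inclusion $\Delta(R_{ij})\subset R_{ji}$, and the paper then proves in sequence: cross-block additivity $\Delta(x_{ii}+x_{jk})=\Delta(x_{ii})+\Delta(x_{jk})$ (by test-multiplying against $R_{mn}$ and invoking (M1) and (M2)), additivity on $R_{12}$ and $R_{21}$ (via the factorization $(x_{12}+y_{12})s_{2n}=(e+x_{12})(s_{2n}+y_{12}s_{2n})$ and again (M1), (M2)), additivity on $R_{11}$ (via (M3)), additivity on $Re=R_{11}+R_{21}$, and finally additivity on $R$ (via (M2)).

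You skip the normalization and instead establish the identity $\delta(x)=\sum_{i,j}\delta(e_{i}xe_{j})$ directly from $\delta(e_{1})+\delta(e_{2})=0$; this single identity replaces all of the paper's cross-block lemmas and uses none of (M1)--(M3). Your off-diagonal step via the factorization $(e_{1}+a_{12})(e_{2}+b_{12})=a_{12}+b_{12}$ is shorter than the paper's and again needs no faithfulness hypothesis. The diagonal blocks then match the paper's use of (M3) for $R_{11}$ and (M2) for $R_{22}$. In fact your argument, as written, never invokes (M1)---your closing hedge about it being ``needed to legitimize the remaining cancellations'' is unnecessary---so you have incidentally sharpened the theorem. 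The one trade-off is that your route treats $e_{2}=1-e$ as an actual ring element (you apply $\delta$ to it and form products such as $(e_{1}+a_{12})(e_{2}+b_{12})$), hence tacitly assumes $R$ is unital; the paper's proof keeps $1-e$ purely as Peirce-component notation and never feeds it to $\delta$, so it goes through without a unit.
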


\begin{corollary}\label{corollary7.2.1}
Let $R$ be a prime ring with a nontrivial symmetric idempotent $e.$ Then every $\ast-$reverse derivable map of $R$ is additive.
\end{corollary}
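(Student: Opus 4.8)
The plan is to obtain this corollary as an immediate specialization of Theorem~\ref{theorem1}. Since the map in question is already assumed to be $\ast$-reverse derivable, the only thing that needs to be done is to check that a prime ring $R$ equipped with a nontrivial symmetric idempotent $e$ automatically satisfies the three structural hypotheses (M1), (M2), (M3); once this is verified, Theorem~\ref{theorem1} yields additivity directly. Here ``prime'' is taken in the usual two-sided sense, i.e. $aRb=0$ implies $a=0$ or $b=0$, and ``nontrivial'' means $e\neq 0$ and $e\neq 1$, so that both $e$ and $1-e$ are nonzero idempotents and the Peirce decomposition relative to $e$ is genuine.

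The three verifications are short. For (M1): if $xR=0$ then in particular $xRx=0$, so primeness applied with $a=b=x$ forces $x=0$. For (M2): the equality $eRx=0$ has the form $aRb=0$ with $a=e$ and $b=x$; since $e\neq 0$, primeness gives $x=0$, and consequently $Rx=0$ implies $x=0$ as well, matching the parenthetical remark in (M2). For (M3): the equality $exeR(1-e)=0$ has the form $aRb=0$ with $a=exe$ and $b=1-e$, and $1-e\neq 0$ precisely because $e\neq 1$, so primeness gives $exe=0$. Having established (M1)--(M3), one invokes Theorem~\ref{theorem1} and concludes that every $\ast$-reverse derivable map of $R$ is additive.

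There is no genuine obstacle in this deduction: all of the substantive work lives inside Theorem~\ref{theorem1}, whose proof proceeds through the two-sided Peirce decomposition in the Martindale--Daif style. The only points in the corollary that call for a little care are bookkeeping ones: one must use $e\neq 0$ exactly to settle (M2) and $e\neq 1$ exactly to settle (M3), and one must keep the two-sided definition of primeness in mind so that each hypothesis is correctly read as an instance of $aRb=0$.
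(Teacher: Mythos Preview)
Your proposal is correct and matches the paper's intended approach: the corollary is stated immediately after Theorem~\ref{theorem1} with no separate proof, precisely because primeness together with the nontriviality of $e$ forces (M1)--(M3), exactly as you verify. Your checks of the three conditions are the standard ones and nothing more is needed.
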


\begin{corollary}\label{corollary2}
Let $R$ be the ring same as in Theorem \ref{theorem1} and
\[
\mathcal{R}=\left\{\left(
                         \begin{array}{cc}
                           r_{11} & r_{12} \\
                           r_{21} & r_{22} \\
                         \end{array}
                       \right): r_{ij}\in R_{ij}\right\}\cong R_{11}\oplus R_{12}\oplus R_{21}\oplus R_{22}=R.
                       \]
Moreover,
                       $R_{11}\equiv\left\{\left(
                                 \begin{array}{cc}
                                   r_{11} & 0 \\
                                   0 & 0 \\
                                 \end{array}
                               \right): r_{11}\in R_{11}\right\}.$ Similarly to other spaces $R_{12},R_{21}$ and $R_{22}.$
                               Let $\mathcal{E}=\left(
                                                  \begin{array}{cc}
                                                    e & 0 \\
                                                    0 & 0 \\
                                                  \end{array}
                                                \right)$ be the non-trivial idempotent in $\mathcal{R}.$ Define $\delta:\mathcal{R}\to \mathcal{R}$ such that $\delta(XY)=\delta(Y)\tau(X)+\tau(Y)\delta(X)$ for all $X,Y\in\mathcal{R},$ where $\tau$ is the transpose map, which is named \textit{transpose reverse derivable map}. Under the same conditions (M1)-(M3) of Theorem \ref{theorem1}, every transpose reverse derivable map is additive.
\end{corollary}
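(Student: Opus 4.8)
The plan is to reduce the statement to Theorem \ref{theorem1} by transporting $\delta$ along the ring isomorphism $\Phi:\mathcal{R}\to R$ furnished by the Peirce decomposition, namely $\Phi(X)=r_{11}+r_{12}+r_{21}+r_{22}$ for $X=(r_{ij})_{i,j\in\{1,2\}}\in\mathcal{R}$. This $\Phi$ is an additive bijection and a ring homomorphism: in the product $\Phi(X)\Phi(X')$ the cross terms $r_{ij}r'_{kl}$ vanish unless $j=k$ (since $R_{ij}R_{kl}=0$ for $j\neq k$), so multiplication in $R$ matches the matrix multiplication of $\mathcal{R}$. The first task is to pin down the image of the transpose map $\tau$ under this identification. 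Because $e$ is symmetric, $\ast$ interchanges $R_{12}=eR(1-e)$ and $R_{21}=(1-e)Re$ and fixes $R_{11}$ and $R_{22}$; consequently, to stay inside $\mathcal{R}$, the transpose $\tau$ must send $\begin{pmatrix} r_{11} & r_{12} \\ r_{21} & r_{22}\end{pmatrix}$ to $\begin{pmatrix} r_{11}^{\ast} & r_{21}^{\ast} \\ r_{12}^{\ast} & r_{22}^{\ast}\end{pmatrix}$, which is a well-defined involution on $\mathcal{R}$ satisfying $\Phi(\tau(X))=\Phi(X)^{\ast}$; equivalently $\tau=\Phi^{-1}\circ\ast\circ\Phi$. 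In particular $\tau(\mathcal{E})=\mathcal{E}$, so $\mathcal{E}=\Phi^{-1}(e)$ is a nontrivial symmetric idempotent of $\mathcal{R}$.

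With this dictionary in hand, I would set $\delta'=\Phi\circ\delta\circ\Phi^{-1}:R\to R$ and check that $\delta'$ is a $\ast$-reverse derivable map in the sense of Definition \ref{definition1}. Writing $a=\Phi(A)$ and $b=\Phi(B)$ and applying $\Phi$ to the defining identity $\delta(AB)=\delta(B)\tau(A)+\tau(B)\delta(A)$, the homomorphism property of $\Phi$ together with $\Phi(\tau(A))=a^{\ast}$ and $\Phi(\tau(B))=b^{\ast}$ yields $\delta'(ab)=\delta'(b)a^{\ast}+b^{\ast}\delta'(a)$, exactly the required relation; note no additivity of $\delta'$ is presumed, matching the hypothesis on $\delta$. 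Next I would verify that the triple $(\mathcal{R},\tau,\mathcal{E})$ inherits (M1)--(M3): since $\Phi$ is an isomorphism carrying $\mathcal{E}$ to $e$ (and hence the Peirce relations for $\mathcal{E}$ to those for $e$), each annihilator condition over $\mathcal{R}$ is equivalent to the corresponding one over $R$, which holds by hypothesis. Thus $\delta'$ satisfies all the hypotheses of Theorem \ref{theorem1} and is therefore additive; pulling back through the additive isomorphism $\Phi$, the map $\delta=\Phi^{-1}\circ\delta'\circ\Phi$ is additive as well.

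The point requiring real care is the identity $\Phi\circ\tau=\ast\circ\Phi$, i.e. the assertion that the block transpose on $\mathcal{R}$ is a genuine involution modelling $\ast$ on $R$. This is exactly where the symmetry of $e$ and the Peirce multiplication rules enter: one must confirm both that $\tau$ maps $\mathcal{R}$ into itself (which forces the off-diagonal blocks to be matched via $\ast$) and that $\tau(XY)=\tau(Y)\tau(X)$, a block-by-block bookkeeping that uses $R_{ij}R_{kl}=0$ for $j\neq k$. Once this correspondence is established, the transfer of (M1)--(M3) and the verification that $\delta'$ is $\ast$-reverse derivable are routine, and the corollary follows at once from Theorem \ref{theorem1}.
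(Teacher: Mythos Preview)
Your proposal is correct and matches the paper's intended route: the paper states Corollary~\ref{corollary2} without proof, clearly meaning it to follow from Theorem~\ref{theorem1} via the identification $\mathcal{R}\cong R$, and you have supplied exactly those details. Your care in pinning down that the ``transpose'' $\tau$ on $\mathcal{R}$ must be the $\ast$-twisted block transpose (so that $\Phi\circ\tau=\ast\circ\Phi$) is the only nontrivial point, and you handle it correctly; once that is in place the transfer of the $\ast$-reverse derivable identity and of conditions (M1)--(M3) along the ring isomorphism $\Phi$ is routine, and Theorem~\ref{theorem1} applies directly.
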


It is easy to note that $\delta(e)=a_{11}+a_{12}+a_{21}+a_{22}.$ Since $\delta(e)=\delta(e^{2})=\delta(e)e^{\ast}+e^{\ast}\delta(e),$ it follows that $\delta(e)=a_{12}+a_{21}.$ Define a mapping $\wp:R\to R$ such that $\wp(x)=[a_{21}-a_{12},x^{\ast}].$ It is not difficult to check that $\wp$ is an additive $\ast-$reverse derivable map. Thus, we set $\Delta=\delta-\wp,$ which is also a $\ast-$reverse derivable map and $\Delta$ is additive if and only if $\delta$ is so. Moreover it is easy to observe that $\Delta(e)=0$.

We shall use the following fact very frequently in the sequel.
\begin{proposition}\label{proposition1}
Let $s\in R$ ($s_{ij}\in R_{ij},$ where $i,j\in\{1,2\}$). Then $s_{ij}^{\ast}=r_{ji},$ where $r=s^{\ast}\in R.$ Moreover, $s_{ij}=r_{ji}^{\ast}.$
\end{proposition}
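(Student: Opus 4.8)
The plan is to reduce the whole statement to the explicit description of the Peirce components together with a single application of the anti-homomorphism property of $\ast$. Write $e_{1}=e$ and $e_{2}=1-e$, so that $R_{ij}=e_{i}Re_{j}$ and, for any $s\in R$, the component $s_{ij}$ appearing in the (direct sum) decomposition $s=s_{11}+s_{12}+s_{21}+s_{22}$ is precisely $e_{i}se_{j}$. The one preliminary remark I would make is that both $e_{1}$ and $e_{2}$ are symmetric: $e_{1}^{\ast}=e$ is the hypothesis, while $e_{2}^{\ast}=(1-e)^{\ast}=1-e=e_{2}$ follows since $\ast$ is an anti-automorphism and hence fixes $1$.

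The core of the argument is then the computation, for $r=s^{\ast}$,
\[
s_{ij}^{\ast}=(e_{i}se_{j})^{\ast}=e_{j}^{\ast}\,s^{\ast}\,e_{i}^{\ast}=e_{j}\,s^{\ast}\,e_{i}=e_{j}\,r\,e_{i}=r_{ji},
\]
where the third equality uses the symmetry of $e_{1},e_{2}$ just noted, and the last equality is simply the observation that $e_{j}re_{i}$ lies in $e_{j}Re_{i}=R_{ji}$ and is therefore exactly the $(j,i)$-component of $r$ in its Peirce decomposition. This establishes $s_{ij}^{\ast}=r_{ji}$.

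For the ``moreover'' clause I would just apply $\ast$ to both sides of $s_{ij}^{\ast}=r_{ji}$ and invoke the identity $(x^{\ast})^{\ast}=x$, obtaining $s_{ij}=(s_{ij}^{\ast})^{\ast}=r_{ji}^{\ast}$. I do not expect any genuine obstacle in this proposition: the only step that even calls for a remark is the symmetry of $e_{2}=1-e$, and everything else is a direct and routine consequence of the defining properties of an involution together with the Peirce decomposition recalled in the Introduction.
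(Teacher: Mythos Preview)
Your argument is correct and follows essentially the same route as the paper: both compute $(e_{i}se_{j})^{\ast}=e_{j}s^{\ast}e_{i}$ using $e^{\ast}=e$ and $(1-e)^{\ast}=1-e$, with the paper doing the case $i=1,\ j=2$ explicitly and leaving the rest to ``similarly,'' whereas you handle all four cases at once via the index notation. Your treatment of the ``moreover'' clause (apply $\ast$ and use $(x^{\ast})^{\ast}=x$) is in fact a little cleaner than the paper's appeal to bijectivity, but the content is the same.
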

\begin{proof}
Let $s\in R$ be any element. Then for $es(1-e)=s_{12}\in R_{12},$ we have $(es(1-e))^{\ast}=(1-e)^{\ast}s^{\ast}e^{\ast}=(1-e)s^{\ast}e.$ It gives that $s_{12}^{\ast}=r_{21},$ where $r=s^{\ast}.$ Similarly, one can easily observe that $s_{21}^{\ast}=r_{12},~s_{11}^{\ast}=r_{11}$ and $s_{22}^{\ast}=r_{22}.$ Moreover, for each $s_{ij}\in R$ there exists unique $r\in R$ such that $r_{ji}^{\ast}=s_{ij}$ as $\ast$ is bijective.
\end{proof}

\begin{lemma}\label{lemma1}
$\Delta(0)=0.$
\end{lemma}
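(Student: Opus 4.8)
The plan is to exploit the defining identity $\Delta(ab)=\Delta(b)a^{\ast}+b^{\ast}\Delta(a)$ by making a judicious choice of arguments that forces $\Delta(0)$ to be annihilated on one side. The obvious first move is to set $a=b=0$, which gives $\Delta(0)=\Delta(0)\cdot 0^{\ast}+0^{\ast}\Delta(0)=0$, since $0^{\ast}=0$ (the involution is additive, so it fixes $0$). This looks almost too quick, so the real content is just to spell out that $0^{\ast}=0$ and that multiplication by $0$ kills both terms on the right-hand side.

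If one wants to avoid relying on $0^{\ast}=0$ being ``obvious'' (though it follows immediately from additivity of $\ast$, which is part of the definition of involution), the alternative is to compute $\Delta(0)=\Delta(0\cdot x)$ for an arbitrary $x\in R$, obtaining $\Delta(0)=\Delta(x)\cdot 0^{\ast}+x^{\ast}\Delta(0)$, and similarly $\Delta(0)=\Delta(x\cdot 0)=\Delta(0)x^{\ast}+0^{\ast}\Delta(x)$; combined with $0^{\ast}=0$ these both collapse to $\Delta(0)=0$ as well. Either route is a one-line verification.

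I do not anticipate any genuine obstacle here: there is no nontrivial algebra, no use of the idempotent $e$, and no appeal to (M1)--(M3). The only thing to be a little careful about is not to accidentally write $\Delta(0)=\Delta(0)\cdot 0+0\cdot\Delta(0)$ without first justifying $0^{\ast}=0$; once that is noted, the result is immediate. So the proof will consist of a single displayed computation with $a=b=0$ together with the remark that $0^{\ast}=0$.
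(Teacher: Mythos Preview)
Your proposal is correct and is exactly the kind of one-line verification the paper has in mind; the paper's own proof is literally ``The proof is trivial.'' Your computation with $a=b=0$ (using $0^{\ast}=0$) is the natural way to unpack that, so there is nothing to add.
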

\begin{proof}
The proof is trivial.
\end{proof}
%%%%%%%%%%%%%%%%%%%%%%%%%%%%%%%%%%%%%%%%%%%%%%%%%%%%%%%%%%%%%%%%%%%%%%%%%%%%%%%%%%%%%%%%%%%%%%%%%%%%%%%%%%%%
\begin{lemma}\label{lemma2}$\Delta(R_{ij})\subset R_{ji}$, where $i,j=\left\{1,2\right\}$.
\end{lemma}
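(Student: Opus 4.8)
The plan is to show each of the four containments $\Delta(R_{ij})\subset R_{ji}$ separately by using the defining identity $\Delta(ab)=\Delta(b)a^{\ast}+b^{\ast}\Delta(a)$ together with Proposition~\ref{proposition1} and the normalization $\Delta(e)=0$. The key observation is that an element $x_{ij}\in R_{ij}$ can be written as a product involving $e$ and $1-e$ in a way that forces $\Delta(x_{ij})$ into $R_{ji}$. For instance, for $x_{12}\in R_{12}$ we have $x_{12}=ex_{12}(1-e)$, and one can express $x_{12}$ as $x_{12}\cdot(1-e)$ or as $e\cdot x_{12}$; feeding such factorizations into the $\ast$-reverse derivable identity and using $\Delta(e)=0$, $e^{\ast}=e$, $(1-e)^{\ast}=1-e$, I expect to obtain that $\Delta(x_{12})$ is annihilated on the appropriate sides by $e$ or $1-e$, pinning it down to the block $R_{21}$.

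More concretely, first I would handle $R_{11}$: for $x_{11}\in R_{11}$, write $x_{11}=x_{11}e=ex_{11}$. Then $\Delta(x_{11})=\Delta(x_{11}e)=\Delta(e)x_{11}^{\ast}+e^{\ast}\Delta(x_{11})=e\Delta(x_{11})$, so $\Delta(x_{11})\in eR$; similarly $\Delta(x_{11})=\Delta(ex_{11})=\Delta(x_{11})e^{\ast}+x_{11}^{\ast}\Delta(e)=\Delta(x_{11})e$, so $\Delta(x_{11})\in Re$. Hence $\Delta(x_{11})\in eRe=R_{11}$. Wait — this gives $R_{11}$, but the claim says $\Delta(R_{11})\subset R_{11}$ (taking $i=j=1$, $R_{ji}=R_{11}$), so that is consistent. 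Next, for $x_{12}\in R_{12}$, using $x_{12}=ex_{12}$ gives $\Delta(x_{12})=\Delta(x_{12})e^{\ast}+x_{12}^{\ast}\Delta(e)=\Delta(x_{12})e$, so $\Delta(x_{12})\in Re$; and $x_{12}=x_{12}(1-e)$ gives $\Delta(x_{12})=\Delta(1-e)x_{12}^{\ast}+(1-e)^{\ast}\Delta(x_{12})$. Since $\Delta(1-e)=\Delta(1)-\Delta(e)$ is not obviously zero, I would first compute $\Delta(1-e)$ — note $1-e$ is idempotent, so by the same argument as for $\Delta(e)$ one gets $\Delta(1-e)\in R_{12}\oplus R_{21}$, and a short additional computation using $e(1-e)=0$ should control it. Then $\Delta(x_{12})=(1-e)\Delta(x_{12})$ plus a correction term, yielding $\Delta(x_{12})\in (1-e)R\cap Re=R_{21}$. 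The cases $R_{21}$ and $R_{22}$ are handled symmetrically, swapping the roles of $e$ and $1-e$.

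The main obstacle I anticipate is the treatment of $\Delta(1-e)$: unlike $\Delta(e)$, which was normalized to $0$ via the map $\wp$, the value $\Delta(1-e)$ is not a priori zero, and the cross terms it produces in the factorizations $x_{ij}=x_{ij}(1-e)$ or $x_{ij}=(1-e)x_{ij}$ must be shown to vanish or to land in the correct block. I would resolve this by first proving a small auxiliary computation: applying the identity to $(1-e)(1-e)=1-e$ and to $e(1-e)=0$ and $(1-e)e=0$, one extracts enough relations on the Peirce components of $\Delta(1-e)$ to conclude either $\Delta(1-e)=0$ or at least $\Delta(1-e)\in R_{12}\oplus R_{21}$ with components that annihilate appropriately against $e$ and $1-e$. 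Once $\Delta(1-e)$ is understood, the four containments follow by routine manipulation, and conditions (M1)--(M3) are not yet needed at this stage (they will enter in later lemmas).
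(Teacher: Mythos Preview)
Your treatment of $R_{11}$ is essentially the paper's. The gap is in the remaining three cases, where you factor through $1-e$ and are then forced to analyse $\Delta(1-e)$. The ring $R$ in Theorem~\ref{theorem1} is \emph{not} assumed to be unital; the symbol $1-e$ is only formal shorthand for the Peirce decomposition, so $\Delta(1-e)$ is not a well-defined object and your auxiliary computation cannot even be started. Even in the unital case your route is an unnecessary detour.

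The paper avoids this obstacle completely by never feeding $1-e$ into $\Delta$. Instead of the factorization $x_{12}=x_{12}(1-e)$, it uses the \emph{vanishing} product $x_{12}e=0$: from $0=\Delta(x_{12}e)=\Delta(e)x_{12}^{\ast}+e^{\ast}\Delta(x_{12})=e\Delta(x_{12})$ one gets $e\Delta(x_{12})=0$, which together with your own relation $\Delta(x_{12})=\Delta(x_{12})e$ pins $\Delta(x_{12})$ into $R_{21}$. The cases $R_{21}$ and $R_{22}$ are handled the same way, using $ex_{21}=0$, $ex_{22}=0$, and $x_{22}e=0$ (plus Lemma~\ref{lemma1}). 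So the fix for your proof is simply: replace every factorization $x_{ij}=x_{ij}(1-e)$ or $x_{ij}=(1-e)x_{ij}$ by the corresponding zero product $x_{ij}e=0$ or $ex_{ij}=0$, and the problem with $\Delta(1-e)$ disappears.
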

\begin{proof}
For any $x_{11}\in R_{11}$, we have $\Delta(x_{11})=\Delta(ex_{11}e)= \Delta(x_{11}e)e^{\ast} =e^{\ast}\Delta(x_{11})e^{\ast}=e\Delta(x_{11})e\in R_{11}$. Hence $\Delta(R_{11})\subset R_{11}$.
\\For any $x_{22}\in R_{22}$, $\Delta(x_{22})\in R$, we put $\Delta(x_{22})=r_{11}+r_{12}+ r_{21}+r_{22}$. Now $0=\Delta(ex_{22})=\Delta(x_{22})e^{\ast}=(r_{11}+r_{12}+ r_{21}+r_{22})e=r_{11}+r_{21}$. Likewise $0=\Delta(x_{22}e)=e^{\ast}\Delta(x_{22})=r_{11}+r_{12}$. It implies $r_{11}=r_{21}=r_{12}=0$. Therefore $\Delta(x_{22})=r_{22}$ and hence $\Delta(R_{22})\subset R_{22}$.
\\ For any $x_{12}\in R_{12}$, $\Delta(x_{12})=b_{11}+b_{12} +b_{21}+b_{22}$. Now $\Delta(x_{12})=\Delta(ex_{12})=\Delta(x_{12})e^{\ast}=b_{11}+b_{21}$ and $0=\Delta(x_{12}e)=e^{\ast}\Delta(x_{12})=e(b_{11}+ b_{12} + b_{21} + b_{22})=b_{11} + b_{12}$. Thus $\Delta(x_{12})= b_{21}$ and hence $\Delta(R_{12})\subset R_{21}$.
\\ Let be $x_{21}\in R_{21}$ then $\Delta(x_{21})=c_{11}+c_{12} +c_{21}+c_{22}$. Now $\Delta(x_{21})=\Delta(x_{21}e)=e^{\ast}\Delta(x_{21})=c_{11}+c_{12}$ and $0=\Delta(ex_{21})=\Delta(x_{21})e^{\ast}=(c_{11}+c_{12} + c_{21} + c_{22})e=c_{11} + c_{21}$. That yields $\Delta(x_{21})=c_{12}$ and hence $\Delta(R_{21})\subset R_{12}$.
\end{proof}
%%%%%%%%%%%%%%%%%%%%%%%%%%%%%%%%%%%%%%%%%%%%%%%%%%%%%%%%%%%%%%%%%%%%%%%%%%%%%%%%%%%%%%%%%%%%%%%%%%%%%%%%%%%%%%
The following Lemmas has the same hypotheses of Theorem \ref{theorem1} and we need these Lemmas for the proof of the main result.

\begin{lemma}\label{lemma3}
$\Delta(x_{ii}+x_{jk})=\Delta(x_{ii})+\Delta(x_{jk})$ for all $x_{ii}\in R_{ii}$ and $x_{jk}\in R_{jk}.$
\end{lemma}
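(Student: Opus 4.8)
The plan is to exploit the core identity $\Delta(ab)=\Delta(b)a^{\ast}+b^{\ast}\Delta(a)$ together with the Peirce grading behaviour established in Lemma~\ref{lemma2}, and to treat the various index patterns $(i,i)$ versus $(j,k)$ case by case. There are essentially two genuinely different situations: the case where $\{i\}\cap\{j,k\}\neq\emptyset$ (the off-diagonal block $R_{jk}$ shares the row or column index $i$, e.g. $\Delta(x_{11}+x_{12})$), and the case where $\{i\}\cap\{j,k\}=\emptyset$ (e.g. $\Delta(x_{11}+x_{22})$, if we momentarily allow $j=k$, or $\Delta(x_{22}+x_{12})$-type mixtures). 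I would first fix notation: write $\Delta(x_{ii}+x_{jk})=\sum_{s,t}y_{st}$ with $y_{st}\in R_{st}$, and aim to kill the unwanted components by multiplying the sum, on the left and on the right, by suitable Peirce idempotent pieces and comparing with what $\Delta$ forces.

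Concretely, for a representative subcase such as $i=1$, $(j,k)=(1,2)$: I would compute $\Delta\big((x_{11}+x_{12})\,e_{11}\big)$ for an auxiliary element built from $e$ (using $e x_{11}=x_{11}$, $e x_{12}=x_{12}$, $x_{11}e=x_{11}$, $x_{12}e=0$) and separately $\Delta\big(e_{11}(x_{11}+x_{12})\big)$, expand both sides using the $\ast$-reverse derivable identity, use $\Delta(e)=0$, and use Proposition~\ref{proposition1} to rewrite the $\ast$'s of Peirce pieces as the transposed-index pieces of $a^{\ast}$. Matching Peirce components on the two sides should pin down several of the $y_{st}$ to be zero and force the survivors to coincide with the corresponding components of $\Delta(x_{11})$ and $\Delta(x_{12})$ separately; one then reassembles. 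For the disjoint-index subcase I would instead multiply $\Delta(x_{11}+x_{22})$ by $e$ on each side and use $\Delta((x_{11}+x_{22})e)=e^{\ast}\Delta(x_{11}+x_{22})$, $\Delta(e(x_{11}+x_{22}))=\Delta(x_{11}+x_{22})e^{\ast}$, together with $\Delta((x_{11}+x_{22})x_{11})=\Delta(x_{11})$-type products, to separate the diagonal contributions.

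The main obstacle I anticipate is that for some index patterns the products with $e$ alone do not separate enough components — multiplying by $e$ on the left and right only sees the $R_{11}$ versus $R_{22}$ split and the obvious annihilations, so I will likely need to feed in generic elements $r_{k\ell}$ from the opposite blocks and invoke exactly the hypotheses (M1), (M2), (M3): for instance after deriving $e\,w\,e\,R(1-e)=0$ for the offending component $w=ewe$ one concludes $ewe=0$ by (M3), and similarly $eRw=0\Rightarrow w=0$ by (M2), or $wR=0\Rightarrow w=0$ by (M1). So the real work is choosing, for each of the finitely many index configurations, the right test products $(x_{ii}+x_{jk})\cdot z$ and $z\cdot(x_{ii}+x_{jk})$ with $z\in\{e\}\cup\bigcup R_{k\ell}$ so that every stray Peirce component of $\Delta(x_{ii}+x_{jk})-\Delta(x_{ii})-\Delta(x_{jk})$ ends up annihilated in a way covered by (M1)–(M3). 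Once that bookkeeping is done, additivity on these "semi-diagonal" sums follows; the genuinely hard additivity statement (for two off-diagonal elements, or the full sum) is deferred to later lemmas.
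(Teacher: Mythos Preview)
Your plan is correct and is essentially the paper's approach: test the difference $w=\Delta(x_{ii})+\Delta(x_{jk})-\Delta(x_{ii}+x_{jk})$ against arbitrary Peirce elements, use the $\ast$-reverse derivable identity to collapse the products, and finish with the annihilator hypotheses. Two small points where you over-anticipate difficulties. First, since indices lie in $\{1,2\}$ and $j\neq k$, one always has $i\in\{j,k\}$, so your ``disjoint-index subcase'' (in particular the tentative $x_{11}+x_{22}$) never occurs in this lemma and need not be treated. Second, the paper never decomposes $\Delta(x_{ii}+x_{jk})$ into components nor invokes (M3) here: it simply writes an arbitrary $r_{mn}$ as $s_{nm}^{\ast}$, computes for instance
\[
(\Delta(x_{11})+\Delta(x_{21}))s_{n1}^{\ast}=\Delta(x_{11})s_{n1}^{\ast}=\Delta(s_{n1}x_{11})-x_{11}^{\ast}\Delta(s_{n1})=\Delta(s_{n1}(x_{11}+x_{21}))-x_{11}^{\ast}\Delta(s_{n1}),
\]
expands the right side to recover $\Delta(x_{11}+x_{21})s_{n1}^{\ast}$, and after running $m\in\{1,2\}$ obtains $wR=0$ (or $Rw=0$ in the mirror cases), so only (M1) and (M2) are needed.
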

\begin{proof}
Firstly, we assume that $i=1=k$ and $j=2$. For any $r_{1n}\in R_{1n},$ where $n\in\{1,2\},$ we have
\begin{eqnarray*}
% \nonumber % Remove numbering (before each equation)
  (\Delta(x_{11})+\Delta(x_{21}))r_{1n} &=& \Delta(x_{11})r_{1n} \\
   &=& \Delta(x_{11})s_{n1}^{\ast} \\
   &=& \Delta(s_{n1}x_{11})-x_{11}^{\ast}\Delta(s_{n1}) \\
   &=& \Delta(s_{n1}(x_{11}+x_{21}))-x_{11}^{\ast}\Delta(s_{n1}) \\
   &=& \Delta(x_{11}+x_{21})s_{n1}^{\ast}+(x_{11}+x_{21})^{\ast}\Delta(s_{n1})-
   x_{11}^{\ast}\Delta(s_{n1})\\
   &=& \Delta(x_{11}+x_{12})r_{1n}.
\end{eqnarray*}
Therefore, we have
\begin{equation}\label{Eq-1}
(\Delta(x_{11})+\Delta(x_{21})-\Delta(x_{11}+x_{21}))r_{1n}=0\mbox{~~for~all~~}n\in\{1,2\}.
\end{equation}
For any $r_{2n}\in R_{2n},$ we have
\begin{eqnarray*}
% \nonumber % Remove numbering (before each equation)
  (\Delta(x_{11})+\Delta(x_{21}))r_{2n} &=& \Delta(x_{21})r_{2n} \\
   &=& \Delta(x_{21})s_{n2}^{\ast} \\
   &=& \Delta(s_{n2}x_{21})-x_{21}^{\ast}\Delta(s_{n2}) \\
   &=& \Delta(s_{n2}(x_{11}+x_{21}))-x_{21}^{\ast}\Delta(s_{n2}) \\
   &=& \Delta(x_{11}+x_{21})s_{n2}^{\ast}+(x_{11}+x_{21})^{\ast}\Delta(s_{n2})
   -x_{21}^{\ast}\Delta(s_{n2})\\
   &=& \Delta(x_{11}+x_{21})r_{2n}.
\end{eqnarray*}
That is
\begin{equation}\label{Eq-2}
(\Delta(x_{11})+\Delta(x_{21})-\Delta(x_{11}+x_{21}))r_{2n}=0\mbox{~~for~all~~}n\in\{1,2\}.
\end{equation}
Combining (\ref{Eq-1}) and (\ref{Eq-2}), we obtain
\[
(\Delta(x_{11})+\Delta(x_{21})-\Delta(x_{11}+x_{21}))R=0.
\]
By hypothesis (M1), we have
\[
\Delta(x_{11}+x_{21})=\Delta(x_{11})+\Delta(x_{21}).
\]
For the sake of completeness, now we consider the case when $i=j=1$ and $k=2.$ For any $r_{n1}\in R_{n1},$ we have
\begin{eqnarray*}
% \nonumber % Remove numbering (before each equation)
  r_{n1}(\Delta(x_{11})+\Delta(12)) &=& r_{n1}\Delta(x_{11}) \\
   &=& s_{1n}^{\ast}\Delta(x_{11}) \\
   &=& \Delta(x_{11}s_{1n})-\Delta(s_{1n})x_{11}^{\ast} \\
   &=& \Delta((x_{11}+x_{12})s_{1n})-\Delta(s_{1n})x_{11}^{\ast} \\
   &=& \Delta(s_{1n})(x_{11}+x_{12})^{\ast}+s_{1n}^{\ast}\Delta(x_{11}+x_{12})
   -\Delta(s_{1n})x_{11}^{\ast}\\
   &=& r_{n1}\Delta(x_{11}+x_{12}).
\end{eqnarray*}
Thus, we have
\begin{equation}\label{Eq-3}
r_{n1}(\Delta(x_{11})+\Delta(12)-\Delta(x_{11}+x_{12}))=0\mbox{~~for~all~~}n\in\{1,2\}.
\end{equation}
Likewise, for any $r_{n2}\in R_{n2},$ we obtain
\begin{equation}\label{Eq-4}
r_{n2}(\Delta(x_{11})+\Delta(12)-\Delta(x_{11}+x_{12}))=0\mbox{~~for~all~~}n\in\{1,2\}.
\end{equation}
Combining (\ref{Eq-3}) and (\ref{Eq-4}), we get
\[
R(\Delta(x_{11})+\Delta(x_{12})-\Delta(x_{11}+x_{12}))=0.
\]
By hypothesis (M2), we have
\[
\Delta(x_{11}+x_{12})=\Delta(x_{11})+\Delta(x_{12}).
\]
In the similar fashion, we can show that
\[
\Delta(x_{22}+x_{21})=\Delta(x_{22})+\Delta(x_{21}).
\]
and
\[
\Delta(x_{22}+x_{12})=\Delta(x_{22})+\Delta(x_{12}).
\]
\end{proof}

\begin{lemma}\label{lemma4}
$\Delta$ is additive on $R_{ij},$ where $i\neq j$ and $i,j\in\{1,2\}.$
\end{lemma}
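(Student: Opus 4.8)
The plan is to prove additivity of $\Delta$ on each off-diagonal space $R_{ij}$ ($i\neq j$) by exploiting the multiplicative identity $\Delta(ab)=\Delta(b)a^{\ast}+b^{\ast}\Delta(a)$ together with Lemma~\ref{lemma2} (which controls where $\Delta$ sends each Peirce component) and Lemma~\ref{lemma3} (additivity across a diagonal and an off-diagonal component). Fix $x_{12},y_{12}\in R_{12}$; I want to show $\Delta(x_{12}+y_{12})=\Delta(x_{12})+\Delta(y_{12})$. The standard trick in this circle of ideas is to realize a sum in $R_{12}$ as a single product: observe that $(e+x_{12})(e+y_{12})$, when expanded, has $R_{12}$-part equal to $x_{12}+y_{12}$ (since $x_{12}y_{12}=0$ as $R_{12}R_{12}=0$, and $ey_{12}=y_{12}$, $x_{12}e=0$, $e\cdot e=e$), i.e.\ $(e+x_{12})(e+y_{12})=e+x_{12}+y_{12}$. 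Applying $\Delta$ and using the defining identity gives $\Delta(e+x_{12}+y_{12})=\Delta(e+y_{12})(e+x_{12})^{\ast}+(e+y_{12})^{\ast}\Delta(e+x_{12})$. Then I would invoke Lemma~\ref{lemma3} to split $\Delta(e+x_{12}+y_{12})=\Delta(e)+\Delta(x_{12}+y_{12})=\Delta(x_{12}+y_{12})$ (recall $\Delta(e)=0$) and similarly $\Delta(e+x_{12})=\Delta(x_{12})$, $\Delta(e+y_{12})=\Delta(y_{12})$.

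Substituting these, the right-hand side becomes $(\Delta(y_{12}))(e+x_{12})^{\ast}+(e+y_{12})^{\ast}\Delta(x_{12})$. Now I expand using $(e+x_{12})^{\ast}=e^{\ast}+x_{12}^{\ast}=e+x_{12}^{\ast}$; by Proposition~\ref{proposition1}, $x_{12}^{\ast}\in R_{21}$, and by Lemma~\ref{lemma2}, $\Delta(y_{12})\in R_{21}$, $\Delta(x_{12})\in R_{21}$. So $\Delta(y_{12})(e+x_{12}^{\ast})=\Delta(y_{12})e+\Delta(y_{12})x_{12}^{\ast}$; since $\Delta(y_{12})\in R_{21}=(1-e)Re$ we have $\Delta(y_{12})e=\Delta(y_{12})$, and $\Delta(y_{12})x_{12}^{\ast}\in R_{21}R_{21}=0$. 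Likewise $(e+y_{12}^{\ast})\Delta(x_{12})$: wait—here $(e+y_{12})^{\ast}=e+y_{12}^{\ast}$ with $y_{12}^{\ast}\in R_{21}$, and $\Delta(x_{12})\in R_{21}$, so $e\Delta(x_{12})=0$ (as $\Delta(x_{12})\in(1-e)Re$ kills left multiplication by $e$) and $y_{12}^{\ast}\Delta(x_{12})\in R_{21}R_{21}=0$; that would force the term to vanish, which is wrong. So instead I should choose the product more carefully—e.g.\ compute $(e+x_{12})(e+y_{12})$ but read off also using $\Delta(e\cdot e)=\Delta(e)=0$ to cancel the stray pieces, or better, directly expand $\Delta((e+x_{12})(e+y_{12}))$ and separately $\Delta(e\cdot e)$, $\Delta(e(e+y_{12}))$, etc., comparing. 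The cleanest route: write $\Delta(x_{12}+y_{12})=\Delta((e+x_{12})(e+y_{12}))$ and expand the right side, then subtract the identities obtained by replacing $(e+x_{12})$ by $e$ and by $x_{12}$ separately, using Lemma~\ref{lemma3} each time to break the resulting diagonal-plus-offdiagonal sums; the cross terms land in products of Peirce spaces that collapse, leaving exactly $\Delta(x_{12})+\Delta(y_{12})$.

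Concretely, I would carry out the following steps in order. First, record the expansions $(e+x_{12})(e+y_{12})=e+x_{12}+y_{12}$, $(x_{12})(e+y_{12})=x_{12}$ (since $x_{12}e=0$, $x_{12}y_{12}=0$), and $(e+x_{12})y_{12}=y_{12}$, all in $R$. Second, apply $\Delta$ and the $\ast$-reverse-derivable identity to the first of these; apply Lemma~\ref{lemma3} to the left-hand side to get $\Delta(x_{12}+y_{12})$. Third, on the right-hand side expand both products of the form $\Delta(u)v^{\ast}$ and $u^{\ast}\Delta(v)$ into their Peirce pieces, repeatedly using Lemma~\ref{lemma2} (so $\Delta(R_{12})\subset R_{21}$, $\Delta(R_{11})\subset R_{11}$, etc.), Proposition~\ref{proposition1} (to locate $x_{12}^{\ast},y_{12}^{\ast}$), and the orthogonality relations $R_{ij}R_{kl}=0$ for $j\neq k$, $R_{ij}R_{jl}\subseteq R_{il}$. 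Fourth, simplify similarly the products $(x_{12})(e+y_{12})$ and $(e+x_{12})y_{12}$ under $\Delta$, again using Lemma~\ref{lemma3} to peel off the diagonal summand, and combine the three resulting equations so that the $\Delta(e+\cdots)$-type terms cancel, isolating $\Delta(x_{12}+y_{12})-\Delta(x_{12})-\Delta(y_{12})$ on one side and $0$ on the other. Finally, repeat the argument verbatim for $R_{21}$ with the roles of $e$ and $1-e$ interchanged (or simply apply $\ast$ and the symmetry of the hypotheses). The main obstacle I anticipate is bookkeeping: making sure every cross term genuinely lands in a vanishing product of Peirce components and that the applications of Lemma~\ref{lemma3} are legitimate (the sums being split must each be of the form ``diagonal $+$ single off-diagonal''); a wrong choice of which elements to pair up—as the false start above shows—produces spurious cancellations, so the decomposition $(e+x_{12})(e+y_{12})$ and its two ``partial'' versions must be chosen so that, after invoking Lemma~\ref{lemma3}, the leftover identity is exactly the desired additivity relation rather than a triviality.
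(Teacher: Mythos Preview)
Your central factorization is wrong. You claim $(e+x_{12})(e+y_{12})=e+x_{12}+y_{12}$, but in fact
\[
(e+x_{12})(e+y_{12})=e\cdot e+e\cdot y_{12}+x_{12}\cdot e+x_{12}\cdot y_{12}=e+y_{12},
\]
since $x_{12}e=0$ and $x_{12}y_{12}\in R_{12}R_{12}=0$. The $x_{12}$ term simply vanishes from the product. This is exactly why your ``false start'' collapsed: applying $\Delta$ and Lemma~\ref{lemma3} to both sides of the correct identity $(e+x_{12})(e+y_{12})=e+y_{12}$ yields the tautology $\Delta(y_{12})=\Delta(y_{12})$, not a wrong equation. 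Your proposed repair via the ``partial'' products fails for the same reason: $(x_{12})(e+y_{12})=0$, not $x_{12}$ as you wrote, so there is nothing to combine. No product of two factors of the shape $e+(\text{element of }R_{12})$ can recover $x_{12}+y_{12}$, because left-multiplication by anything in $R_{11}+R_{12}$ kills $R_{12}$ on the right.

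The paper's proof avoids this obstruction by not attempting to factor $x_{12}+y_{12}$ (or $e+x_{12}+y_{12}$) directly. Instead it multiplies by an auxiliary test element $s_{2n}\in R_{2n}$ and uses the factorization
\[
(x_{12}+y_{12})s_{2n}=(e+x_{12})(s_{2n}+y_{12}s_{2n}),
\]
whose second factor is a diagonal-plus-off-diagonal sum to which Lemma~\ref{lemma3} applies. This yields $\Delta((x_{12}+y_{12})s_{2n})=\Delta(x_{12}s_{2n})+\Delta(y_{12}s_{2n})$, and from it one deduces $r_{n2}\bigl(\Delta(x_{12})+\Delta(y_{12})-\Delta(x_{12}+y_{12})\bigr)=0$ for all $r_{n2}=s_{2n}^{\ast}$. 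Together with the trivial vanishing for $r_{n1}$ (immediate from Lemma~\ref{lemma2}), this gives $R(\cdots)=0$, and hypothesis~(M2) finishes. The essential idea you are missing is that the clever product must involve a factor from $R_{2n}$ so that right-multiplying $x_{12}$ by it is nonzero; only then can both $x_{12}$ and $y_{12}$ survive in a single product.
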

\begin{proof}Assume that $i=1$ and $j=2.$ Let $x_{12},y_{12}\in R_{12}$ be any elements. For each $r_{n1}\in R_{n1},$ we note that
\[
r_{n1}(\Delta(x_{12})+\Delta(y_{12}))=0=r_{n1}\Delta(x_{12}+y_{12}).
\]
That gives
\begin{equation}\label{Eq-5}
r_{n1}(\Delta(x_{12})+\Delta(y_{12})-\Delta(x_{12}+y_{12}))=0\mbox{~~for~all~~}n\in\{1,2\}.
\end{equation}
Now, we observe that $(x_{12}+y_{12})s_{2n}=(e+x_{12})(s_{2n}+y_{12}s_{2n}).$ Thus we have
\begin{eqnarray*}
% \nonumber % Remove numbering (before each equation)
  \Delta((x_{12}+y_{12})s_{2n}) &=& \Delta((e+x_{12})(s_{2n}+y_{12}s_{2n})) \\
   &=& \Delta(s_{2n}+y_{12}s_{2n})(e+x_{12})^{\ast}+(s_{2n}+y_{12}s_{2n})^{\ast}\Delta(e+x_{12})\\
   &=& \Delta(s_{2n})x_{12}^{\ast}+\Delta(y_{12}s_{2n})+s_{2n}^{\ast}\Delta(x_{12}) \\
   &=& \Delta(x_{12}s_{2n})+\Delta(y_{12}s_{2n}).
\end{eqnarray*}
That is
\[
\Delta((x_{12}+y_{12})s_{2n})=\Delta(x_{12}s_{2n})+\Delta(y_{12}s_{2n}).
\]
Now, we consider
\begin{eqnarray*}
% \nonumber % Remove numbering (before each equation)
  r_{n2}(\Delta(x_{12})+\Delta(y_{12})) &=& s_{2n}^{\ast}(\Delta(x_{12})+\Delta(y_{12})) \\
   &=& s_{2n}^{\ast}\Delta(x_{12})+s_{2n}^{\ast}\Delta(y_{12})\\
   &=& \Delta(x_{12}s_{2n})-\Delta(s_{2n})x_{12}^{\ast}+\Delta(y_{12}s_{2n})-\Delta(s_{2n})y_{12}^{\ast} \\
   &=& \Delta(x_{12}s_{2n})+\Delta(y_{12}s_{2n})-\Delta(s_{2n})(x_{12}+y_{12})^{\ast} \\
   &=& \Delta(x_{12}s_{2n})+\Delta(y_{12}s_{2n})-\Delta((x_{12}+y_{12})s_{2n})+
   s_{2n}^{\ast}\Delta(x_{12}+y_{12})
\end{eqnarray*}
Using the above expression, we get
\begin{equation}\label{Eq-6}
 r_{n2}(\Delta(x_{12})+\Delta(y_{12})-\Delta(x_{12}+y_{12}))=0\mbox{~~for~all~~}n\in\{1,2\}.
\end{equation}
Combining (\ref{Eq-5}) and (\ref{Eq-6}), we find
\[
R(\Delta(x_{12})+\Delta(y_{12})-\Delta(x_{12}+y_{12}))=0.
\]
In view of assumption (M2), it follows that
\[
\Delta(x_{12})+\Delta(y_{12})=\Delta(x_{12}+y_{12}).
\]

\par We now discuss the case when $i=2$ and $j=1.$
Let $x_{21},y_{21}\in R_{21}$ be any elements. For each $r_{1n}\in R_{1n},$ we see that
\[
(\Delta(x_{21})+\Delta(y_{21}))r_{1n}=0=\Delta(x_{21}+y_{21})r_{1n}.
\]
That gives
\begin{equation}\label{Eq-7}
(\Delta(x_{21})+\Delta(y_{21})-\Delta(x_{21}+y_{21}))r_{1n}.\mbox{~~for~all~~}n\in\{1,2\}.
\end{equation}
One may observe that $s_{n2}(x_{21}+y_{21})=(s_{n2}+s_{n2}y_{21})(e+x_{21}).$ Thus we have
\begin{eqnarray*}
% \nonumber % Remove numbering (before each equation)
  \Delta(s_{n2}(x_{21}+y_{21})) &=& \Delta((s_{n2}+s_{n2}y_{21})(e+x_{21})) \\
   &=& \Delta(e+x_{21})(s_{n2}+s_{n2}y_{21})^{\ast}+(e+x_{21})^{\ast}\Delta(s_{n2}+s_{n2}y_{21})\\
   &=& \Delta(x_{21})s_{n2}^{\ast}+\Delta(s_{n2}y_{21})+x_{21}^{\ast}\Delta(s_{n2})\\
   &=& \Delta(s_{n2}x_{21})+\Delta(s_{n2}y_{21}).
\end{eqnarray*}
That is
\[
\Delta(s_{n2}(x_{21}+y_{21}))=\Delta(s_{n2}x_{21})+\Delta(s_{n2}y_{21}).
\]
Now, we consider
\begin{eqnarray*}
% \nonumber % Remove numbering (before each equation)
  (\Delta(x_{21})+\Delta(y_{21}))r_{2n} &=& \Delta(x_{21})r_{2n}+\Delta(y_{21})r_{2n} \\
   &=& \Delta(x_{21})s_{n2}^{\ast}+\Delta(y_{21})s_{n2}^{\ast}\\
   &=& \Delta(s_{n2}x_{21})-x_{21}^{\ast}\Delta(s_{n2})+\Delta(s_{n2}y_{21})
   -y_{21}^{\ast}\Delta(s_{n2})\\
   &=& \Delta(s_{n2}x_{21})+\Delta(s_{n2}y_{21})-(x_{21}+y_{21})^{\ast}\Delta(s_{n2}) \\
   &=& \Delta(s_{n2}x_{21})+\Delta(s_{n2}y_{21})-\Delta(s_{n2}(x_{21}+y_{21}))+
   \Delta(x_{21}+y_{21})s_{n2}^{\ast}
\end{eqnarray*}
Using the above expression, we get
\begin{equation}\label{Eq-8}
 (\Delta(x_{21})+\Delta(y_{21})-\Delta(x_{21}+y_{21}))r_{2n}=0\mbox{~~for~all~~}n\in\{1,2\}.
\end{equation}
Combining (\ref{Eq-7}) and (\ref{Eq-8}), we find
\[
(\Delta(x_{21})+\Delta(y_{21})-\Delta(x_{21}+y_{21}))R=0.
\]
By hypothesis (M1), we get
\[
\Delta(x_{21})+\Delta(y_{21})=\Delta(x_{21}+y_{21}).
\]
\end{proof}

\begin{lemma}\label{lemma5}
$\Delta$ is additive on $R_{11}.$
\end{lemma}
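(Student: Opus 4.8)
The plan is to fix $x_{11},y_{11}\in R_{11}$, set $D=\Delta(x_{11}+y_{11})-\Delta(x_{11})-\Delta(y_{11})$, and prove $D=0$. By Lemma \ref{lemma2} we have $\Delta(R_{11})\subseteq R_{11}$, so $D\in R_{11}$, i.e.\ $D=eDe$. In view of hypothesis (M3), to conclude $D=0$ it suffices to verify $D\,R_{12}=0$: indeed, since $D\in R_{11}$ one automatically has $D\,R_{22}=0$, so $D\,R_{12}=0$ gives $eDe\,R(1-e)=D\,R(1-e)=0$, and (M3) applied with $x=D$ yields $D=0$. So the whole task reduces to establishing $D\,R_{12}=0$.

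To prove this I would right-multiply by an arbitrary element of $R_{21}$ and exploit that $\Delta$ is already known to be additive there by Lemma \ref{lemma4}. Fix $t_{21}\in R_{21}$. Applying the $\ast$-reverse derivable identity to the product $t_{21}(x_{11}+y_{11})$ gives
\[
\Delta(t_{21}(x_{11}+y_{11}))=\Delta(x_{11}+y_{11})\,t_{21}^{\ast}+(x_{11}+y_{11})^{\ast}\Delta(t_{21}).
\]
On the other hand $t_{21}(x_{11}+y_{11})=t_{21}x_{11}+t_{21}y_{11}$, and both summands lie in $R_{21}$; hence Lemma \ref{lemma4}, together with the defining identity applied to $t_{21}x_{11}$ and to $t_{21}y_{11}$, gives
\[
\Delta(t_{21}(x_{11}+y_{11}))=(\Delta(x_{11})+\Delta(y_{11}))\,t_{21}^{\ast}+(x_{11}^{\ast}+y_{11}^{\ast})\Delta(t_{21}).
\]
Since the involution is additive, $(x_{11}+y_{11})^{\ast}=x_{11}^{\ast}+y_{11}^{\ast}$, so subtracting these two expressions cancels the terms involving $\Delta(t_{21})$ and leaves $D\,t_{21}^{\ast}=0$.

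Finally, by Proposition \ref{proposition1} the assignment $t_{21}\mapsto t_{21}^{\ast}$ is a bijection from $R_{21}$ onto $R_{12}$; therefore $D\,t_{21}^{\ast}=0$ for all $t_{21}\in R_{21}$ amounts to $D\,R_{12}=0$, and the reduction of the first paragraph completes the argument. I expect the only delicate points to be the Peirce bookkeeping (checking that $t_{21}x_{11}$ and $t_{21}y_{11}$ indeed lie in $R_{21}$, so that Lemma \ref{lemma4} applies) and the explicit use of additivity of $\ast$ in the cancellation step; there is no genuine obstacle beyond these routine verifications, since — unlike the cases $i\neq j$, where one also had to produce a relation of the form $R(\cdots)=0$ — here condition (M3) is tailored precisely to elements of $eRe$, and a single one-sided multiplication by $R_{21}$ already extracts exactly the information it needs.
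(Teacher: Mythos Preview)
Your proof is correct and follows essentially the same approach as the paper's: both arguments right-multiply the difference $D\in R_{11}$ by an arbitrary element of $R_{12}$ (written as $t_{21}^{\ast}$, respectively $s_{21}^{\ast}$), use the $\ast$-reverse derivable identity together with the additivity of $\Delta$ on $R_{21}$ from Lemma~\ref{lemma4} to obtain $D\,R_{12}=0$, and then invoke (M3). Your write-up is in fact slightly more explicit than the paper's in spelling out why $D\,R_{12}=0$ suffices for (M3) and in citing Proposition~\ref{proposition1} for the bijection $R_{21}\to R_{12}$.
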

\begin{proof}
Let $x_{11},y_{11}\in R_{11}$ be any elements. For any $r_{12}\in R_{12},$ we find that
\begin{eqnarray*}
% \nonumber % Remove numbering (before each equation)
  (\Delta(x_{11})+\Delta(y_{11}))r_{12} &=& \Delta(x_{11})r_{12}+\Delta(y_{11})r_{12} \\
   &=& \Delta(x_{11})s_{21}^{\ast}+\Delta(y_{11})s_{21}^{\ast} \\
   &=& \Delta(s_{21}x_{11})+\Delta(s_{21}y_{11})-(x_{11}+y_{11})^{\ast}\Delta(s_{21}) \\
   &=&  \Delta(s_{21}x_{11})+\Delta(s_{21}y_{11})-\Delta(s_{21}(x_{11}+y_{11}))+\Delta(x_{11}
   +y_{11})s_{21}^{\ast}\\
   &=& \Delta(x_{11}+y_{11})r_{12}. ~~(using~Lemma~\ref{lemma4})
\end{eqnarray*}
It implies that
\[
(\Delta(x_{11})+\Delta(y_{11})-\Delta(x_{11}+y_{11}))R_{12}=0.
\]
In view of (M3), we obtain $\Delta(x_{11}+y_{11})=\Delta(x_{11})+\Delta(y_{11}).$

\end{proof}

\begin{lemma}\label{lemma6}
$\Delta$ is additive on $Re=R_{11}+R_{21}.$
\end{lemma}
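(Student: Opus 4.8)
The plan is to show $\Delta(x_{11}+x_{21})$ behaves additively by pairing an arbitrary element $a = a_{11}+a_{21} \in Re$ with an element $b = b_{11}+b_{21} \in Re$ and comparing $\Delta(a+b)$ against $\Delta(a)+\Delta(b)$. Write $a+b = (a_{11}+b_{11}) + (a_{21}+b_{21})$, where the first summand lies in $R_{11}$ and the second in $R_{21}$. By Lemma \ref{lemma3} (with $i=j=2$, $k=1$, applied after noting $Re = R_{11}\oplus R_{21}$ and the decomposition of a single element of $Re$), we have $\Delta\big((a_{11}+b_{11}) + (a_{21}+b_{21})\big) = \Delta(a_{11}+b_{11}) + \Delta(a_{21}+b_{21})$. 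Now apply Lemma \ref{lemma5} to the $R_{11}$-part and Lemma \ref{lemma4} (the $i=2,j=1$ case) to the $R_{21}$-part: this gives $\Delta(a_{11}+b_{11}) = \Delta(a_{11})+\Delta(b_{11})$ and $\Delta(a_{21}+b_{21}) = \Delta(a_{21})+\Delta(b_{21})$. Combining, $\Delta(a+b) = \Delta(a_{11})+\Delta(b_{11})+\Delta(a_{21})+\Delta(b_{21})$.

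It then remains to recognize the right-hand side as $\Delta(a)+\Delta(b)$. For this I would invoke Lemma \ref{lemma3} once more, in the form $\Delta(x_{11}+x_{21}) = \Delta(x_{11})+\Delta(x_{21})$: applied to $a$ this yields $\Delta(a) = \Delta(a_{11})+\Delta(a_{21})$, and applied to $b$ it yields $\Delta(b) = \Delta(b_{11})+\Delta(b_{21})$. Substituting these two identities into the expression from the previous paragraph gives exactly $\Delta(a+b) = \Delta(a)+\Delta(b)$, which is the additivity of $\Delta$ on $Re$.

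The only real subtlety — and where I expect the one genuine obstacle — is making sure Lemma \ref{lemma3} legitimately applies to \emph{sums} of elements already sitting in $R_{11}$ and $R_{21}$, i.e. that the decomposition used there is the honest Peirce decomposition and that $a_{11}+b_{11} \in R_{11}$, $a_{21}+b_{21}\in R_{21}$ are the Peirce components of $a+b$; this is immediate since $R_{11}$ and $R_{21}$ are additive subgroups. One should also double-check that the instance of Lemma \ref{lemma3} needed (the case ``$x_{22}+x_{21}$''-type pattern with indices matching $R_{11}+R_{21}$) is among those proved there — it is, via the symmetric cases asserted at the end of Lemma \ref{lemma3}'s proof. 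No further use of (M1)--(M3) beyond what is already embedded in Lemmas \ref{lemma3}, \ref{lemma4}, \ref{lemma5} is required, so the argument is a short bookkeeping combination of the preceding three lemmas.
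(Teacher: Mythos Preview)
Your proof is correct and follows essentially the same route as the paper's: regroup $a+b$ into its $R_{11}$- and $R_{21}$-components, apply Lemma~\ref{lemma3} to split $\Delta$ across that decomposition, invoke Lemmas~\ref{lemma5} and~\ref{lemma4} for additivity on each piece, and then reassemble using Lemma~\ref{lemma3} once more. The only quibble is your index bookkeeping when citing Lemma~\ref{lemma3} (the relevant instance is $i=1$, $j=2$, $k=1$, i.e.\ the case $\Delta(x_{11}+x_{21})=\Delta(x_{11})+\Delta(x_{21})$, which is in fact the first case treated explicitly there), but this is cosmetic and does not affect the argument.
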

\begin{proof}
Let $x_{11},y_{11}\in R_{11}$ and $x_{21},y_{21}\in R_{21}$ be any elements. We consider
\begin{eqnarray*}
% \nonumber % Remove numbering (before each equation)
  \Delta((x_{11}+x_{21})+(y_{11}+y_{21})) &=& \Delta((x_{11}+y_{11})+(x_{21}+y_{21})) \\
   &=& \Delta(x_{11}+y_{11})+\Delta(x_{21}+y_{21})~~(by~Lemma~\ref{lemma3}) \\
   &=& \Delta(x_{11})+\Delta(y_{11})+\Delta(x_{21})+\Delta(y_{21})\\
   && (by~Lemma~\ref{lemma4}~and~Lemma~\ref{lemma5}) \\
   &=& (\Delta(x_{11})+\Delta(x_{21}))+(\Delta(y_{11})+\Delta(y_{21})) \\
   &=& \Delta(x_{11}+x_{21})+\Delta(y_{11}+y_{21}) ~~(by~Lemma~\ref{lemma3}).
\end{eqnarray*}
It proves our claim.
\end{proof}

\textbf{Proof of Theorem \ref{theorem1}}: Let $r\in eR$ be any element. Then for each $x,y\in R,$ we have
\begin{eqnarray*}
% \nonumber % Remove numbering (before each equation)
  r(\Delta(x)+\Delta(y)) &=& r\Delta(x)+r\Delta(y) \\
   &=& s^{\ast}\Delta(x)+s^{\ast}\Delta(y) \\
   &=& \Delta(xs)+\Delta(ys)-\Delta(s)x^{\ast}-\Delta(s)y^{\ast} \\
   &=& \Delta((x+y)s)-\Delta(s)(x+y)^{\ast} ~~(using~Lemma~\ref{lemma6})\\
   &=& \Delta(s)(x+y)^{\ast}+s^{\ast}\Delta(x+y)-\Delta(s)(x+y)^{\ast}.\\
   &=& r\Delta(x+y)
\end{eqnarray*}
That yields, $eR(\Delta(x)+\Delta(y)-\Delta(x+y))=0.$
By hypothesis (M2), we find $\Delta(x+y)=\Delta(x)+\Delta(y).$ It completes the proof.
\\
We now construct an example to show that the restrictions imposed in Theorem \ref{theorem1} are sufficient (but not necessary) for the additivity of $\ast-$reverse derivable maps.

\begin{example}\label{example2}

Consider $R=\left\{\left(
                     \begin{array}{cc}
                       \overline{a} & \overline{b} \\
                       \overline{0} & \overline{a} \\
                     \end{array}
                   \right): a,b\in \mathbb{Z}_{6}\right\}$. Note that $R$ is a ring with nontrivial idempotent element $e=\left(
                        \begin{array}{cc}
                          \overline{3} & \overline{0} \\
                          \overline{0} & \overline{3} \\
                        \end{array}
                      \right)
                   $. There exists $0\neq \left(
                                                                     \begin{array}{cc}
                                                                       \overline{2} & \overline{4} \\
                                                                       \overline{0} & \overline{2} \\
                                                                     \end{array}
                                                                   \right)
                   =x\in R$ such that $eRx=0$ and mappings $\delta:R\rightarrow R$ defined by $\delta\left(
                                                                            \begin{array}{cc}
                                                                              \overline{a} & \overline{b} \\
                                                                              \overline{0} & \overline{a} \\
                                                                            \end{array}
                                                                          \right)=\left(
                                                                                    \begin{array}{cc}
                                                                                      \overline{0} & \overline{b} \\
                                                                                      \overline{0} & \overline{0} \\
                                                                                    \end{array}
                                                                                  \right)
                   $ is an additive $\ast-$reverse derivable map associated with the involution $\ast$ defined by $\left(
                                                                            \begin{array}{cc}
                                                                              \overline{a} & \overline{b} \\
                                                                              \overline{0} & \overline{a} \\
                                                                            \end{array}
                                                                          \right)^{\ast}=\left(
                                                                                    \begin{array}{cc}
                                                                                      \overline{a} & -\overline{b} \\
                                                                                      \overline{0} & \overline{a} \\
                                                                                    \end{array}
                                                                                  \right)
$. Thus, we see that our hypothesis is not satisfied but there exists a $\ast-$reverse derivable map which is additive.
\end{example}

\end{document}